\font\smallit=cmti10
\font\smalltt=cmtt10
\newcommand\widebar[1]{\mathop{\overline{#1}}}
\newtheorem{theorem}{Theorem}
\newtheorem{lemma}[theorem]{Lemma}
\newtheorem{observation}[theorem]{Observation}
\newtheorem{corollary}{Corollary}
\newtheorem{definition}{Definition}
\newcommand{\dollarsign}{L}
\begin{document}
\begin{center}

\uppercase{Complete Graphs and Polyominoes}
\author{Todd Mullen}

\vskip 20pt
{\bf Todd Mullen, Richard Nowakowski, Danielle Cox\footnote{This author was supported by NSERC. }}\\
{\smallit Department of Mathematics and Statistics, Dalhousie University, NS, Canada}\\
{\tt toddmullen26@outlook.com}\\

\end{center}
\vskip 30pt

\centerline{\smallit Received: , Revised: , Accepted: , Published: }
\vskip 30pt

\centerline{\bf Abstract}

In the chip-firing variant, Diffusion, chips flow from places of high concentration to places of low concentration (or equivalently, from the rich to the poor). We explore this model on complete graphs, determining the number of different ways that chips can be distributed on an unlabelled complete graph and demonstrate connections to polyominoes.

\pagestyle{myheadings}
\markright{\smalltt INTEGERS: 16(2016)\hfill}
\thispagestyle{empty}
\baselineskip=12.875pt
\vskip 30pt
\section{Introduction}

Introduced by Duffy et al. \cite{duffy}, Parallel Diffusion is a process defined on a simple finite graph, $G$, in which at every time step, chips are diffused throughout the graph following specific rules. Each vertex is assigned a \textit{stack size} which is an integral number. This number represents the number of chips a vertex has. At each time step, the chips are redistributed via the following rules: If a vertex is adjacent to a vertex with fewer chips, it takes a chip from its stack and sends it to the poorer vertex. 
We call this action the \textit{firing} of a vertex. At each time step of the diffusion process, every vertex fires. Note that when a vertex with no poorer neighbours fires, it does not send any chips. In Figure~\ref{fig:parex}, we see an example of Parallel Diffusion. Note that at each time step, the vertices of $P_5$ have a stack size; an assignment of stack sizes to the vertices of a graph $G$ is referred to as a \textit{configuration}, denoted $C=\{(v,|v|^{C}): v\in V(G)\}$, where $|v|^C$ is the stack size of $v$ in $C$. We omit the superscript when the configuration is clear.

A vertex $v$ is said to be \textit{richer} than another vertex $u$ in configuration $C$ if $|v|^C > |u|^C$. In this instance, $u$ is said to be \textit{poorer} than $v$ in $C$. If $|v|^C<0$, we say $v$ is \textit{in debt in $C$}.

In Parallel Diffusion, given a graph $G$ and a configuration $C$ on $G$, to \textit{fire} $C$ is to decrease the stack size of every vertex $v \in V(G)$ by the number of poorer neighbours $v$ has and increase the stack size of $v$ by the number of richer neighbours $v$ has. More formally, for all $v$, let $Z_{-}^{C}(v) = \{u \in N(v) : |v|^{C} > |u|^{C}\}$ and let $Z_{+}^{C}(v) = \{u \in N(v) : |u|^{C} > |v|^{C}\}$. Then, firing results in every vertex $v$ changing from a stack size of $|v|^{C}$ to a stack size of $|v|^{C} + |Z_{+}^{C}(v)| - |Z_{-}^{C}(v)|$. 

We are interested in counting the number of configurations on $K_n$, $n \geq 1$ (up to a definition of equivalence). We will show, in Theorem~\ref{thm:polyominoisomorphism}, a bijection between the number of configurations that exist on unlabelled complete graphs of order $n$ and the number of board-pile $n$-ominoes (or sets of stacked $1 \times k$ rectangles with a total area of $n$). An example of a board-pile $n$-omino is given in Figure~\ref{fig:polyominoexample1}, and an example of a configuration on $K_n$ is given in Figure~\ref{fig:completeconfiguration1}.
i
\begin{figure}
	\[
	\begin{tikzpicture}[-,-=stealth', auto,node distance=1.5cm,
	thick,scale=0.6, main node/.style={scale=0.6,circle,draw}]
	
	\node[main node, label={[red]90:0}] (1) 					 {$v_5$};					 
	\node[main node, label={[red]90:2}] (2)  [right of=1]        {$v_4$};
	\node[main node, label={[red]90:0}] (3)  [right of=2]        {$v_3$};  
	\node[main node, label={[red]90:4}] (4) 	[right of=3]     {$v_2$};					 
	\node[main node, label={[red]90:1}] (5)  [right of=4]        {$v_1$}; 
	\node[draw=none, fill=none] (100)  [right of=5]              {$C_0$};
	
	\path[every node/.style={font=\sffamily\small}]		 
	
	(1) edge node [] {} (2)			
	(2) edge node [] {} (3)
	(3) edge node [] {} (4)
	(4) edge node [] {} (5);

	\end{tikzpicture} \]
	Initial firing
	\FloatBarrier

	\[
	\begin{tikzpicture}[-,-=stealth', auto,node distance=1.5cm,
	thick,scale=0.6, main node/.style={scale=0.6,circle,draw}]
	
	\node[main node, label={[red]90:1}] (1) 					    {$v_5$};					 
	\node[main node, label={[red]90:0}] (2)  [right of=1]        {$v_4$};
	\node[main node, label={[red]90:2}] (3)  [right of=2]        {$v_3$};  
	\node[main node, label={[red]90:2}] (4) 	[right of=3]        {$v_2$};					 
	\node[main node, label={[red]90:2}] (5)  [right of=4]        {$v_1$}; 
	\node[draw=none, fill=none] (100)  [right of=5]              {$C_1$};
	
	\path[every node/.style={font=\sffamily\small}]		 
	
	(1) edge node [] {} (2)			
	(2) edge node [] {} (3)
	(3) edge node [] {} (4)
	(4) edge node [] {} (5);

	\end{tikzpicture} \]
	Firing at step 1
	\FloatBarrier

	\[
	\begin{tikzpicture}[-,-=stealth', auto,node distance=1.5cm,
	thick,scale=0.6, main node/.style={scale=0.6,circle,draw}]
	
	\node[main node, label={[red]90:0}] (1) 					    {$v_5$};					 
	\node[main node, label={[red]90:2}] (2)  [right of=1]        {$v_4$};
	\node[main node, label={[red]90:1}] (3)  [right of=2]        {$v_3$};  
	\node[main node, label={[red]90:2}] (4) 	[right of=3]        {$v_2$};					 
	\node[main node, label={[red]90:2}] (5)  [right of=4]        {$v_1$}; 
	\node[draw=none, fill=none] (100)  [right of=5]              {$C_2$};
	
	\path[every node/.style={font=\sffamily\small}]		 
	
	(1) edge node [] {} (2)			
	(2) edge node [] {} (3)
	(3) edge node [] {} (4)
	(4) edge node [] {} (5);

	\end{tikzpicture} \]
	Firing at step 2
	\FloatBarrier

	\[
	\begin{tikzpicture}[-,-=stealth', auto,node distance=1.5cm,
	thick,scale=0.6, main node/.style={scale=0.6,circle,draw}]
	
	\node[main node, label={[red]90:1}] (1) 					    {$v_5$};					 
	\node[main node, label={[red]90:0}] (2)  [right of=1]        {$v_4$};
	\node[main node, label={[red]90:3}] (3)  [right of=2]        {$v_3$};  
	\node[main node, label={[red]90:1}] (4) 	[right of=3]        {$v_2$};					 
	\node[main node, label={[red]90:2}] (5)  [right of=4]        {$v_1$}; 
	\node[draw=none, fill=none] (100)  [right of=5]              {$C_3$};	
	
	\path[every node/.style={font=\sffamily\small}]		 
	
	(1) edge node [] {} (2)			
	(2) edge node [] {} (3)
	(3) edge node [] {} (4)
	(4) edge node [] {} (5);

	\end{tikzpicture} \]
	Firing at step 3
	\FloatBarrier

	\[
	\begin{tikzpicture}[-,-=stealth', auto,node distance=1.5cm,
	thick,scale=0.6, main node/.style={scale=0.6,circle,draw}]
	
	\node[main node, label={[red]90:0}] (1) 					    {$v_5$};					 
	\node[main node, label={[red]90:2}] (2)  [right of=1]        {$v_4$};
	\node[main node, label={[red]90:1}] (3)  [right of=2]        {$v_3$};  
	\node[main node, label={[red]90:3}] (4) 	[right of=3]        {$v_2$};					 
	\node[main node, label={[red]90:1}] (5)  [right of=4]        {$v_1$}; 
	\node[draw=none, fill=none] (100)  [right of=5]              {$C_4$};	
	
	\path[every node/.style={font=\sffamily\small}]		 
	
	(1) edge node [] {} (2)			
	(2) edge node [] {} (3)
	(3) edge node [] {} (4)
	(4) edge node [] {} (5);

	\end{tikzpicture} \]
	Firing at step 4
	\FloatBarrier

	\[
	\begin{tikzpicture}[-,-=stealth', auto,node distance=1.5cm,
	thick,scale=0.6, main node/.style={scale=0.6,circle,draw}]
	
	\node[main node, label={[red]90:1}] (1) 					    {$v_5$};					 
	\node[main node, label={[red]90:0}] (2)  [right of=1]        {$v_4$};
	\node[main node, label={[red]90:3}] (3)  [right of=2]        {$v_3$};  
	\node[main node, label={[red]90:1}] (4) 	[right of=3]        {$v_2$};					 
	\node[main node, label={[red]90:2}] (5)  [right of=4]        {$v_1$}; 
	\node[draw=none, fill=none] (100)  [right of=5]              {$C_5$};	
	
	\path[every node/.style={font=\sffamily\small}]		 
	
	(1) edge node [] {} (2)			
	(2) edge node [] {} (3)
	(3) edge node [] {} (4)
	(4) edge node [] {} (5);

	\end{tikzpicture} \]
	Firing at step 5
	\FloatBarrier

	\[
	\begin{tikzpicture}[-,-=stealth', auto,node distance=1.5cm,
	thick,scale=0.6, main node/.style={scale=0.6,circle,draw}]
	
	\node[main node, label={[red]90:0}] (1) 					    {$v_5$};					 
	\node[main node, label={[red]90:2}] (2)  [right of=1]        {$v_4$};
	\node[main node, label={[red]90:1}] (3)  [right of=2]        {$v_3$};  
	\node[main node, label={[red]90:3}] (4) 	[right of=3]        {$v_2$};					 
	\node[main node, label={[red]90:1}] (5)  [right of=4]        {$v_1$}; 
	\node[draw=none, fill=none] (100)  [right of=5]              {$C_6$};
	\path[every node/.style={font=\sffamily\small}]		 
	
	(1) edge node [] {} (2)			
	(2) edge node [] {} (3)
	(3) edge node [] {} (4)
	(4) edge node [] {} (5);

	\end{tikzpicture} \]
	
	\FloatBarrier
	\caption{Several steps in a Parallel Diffusion game on $P_5$. The period begins with $C_3$. This is the first configuration that is repeated.}
	\label{fig:parex}
\end{figure}
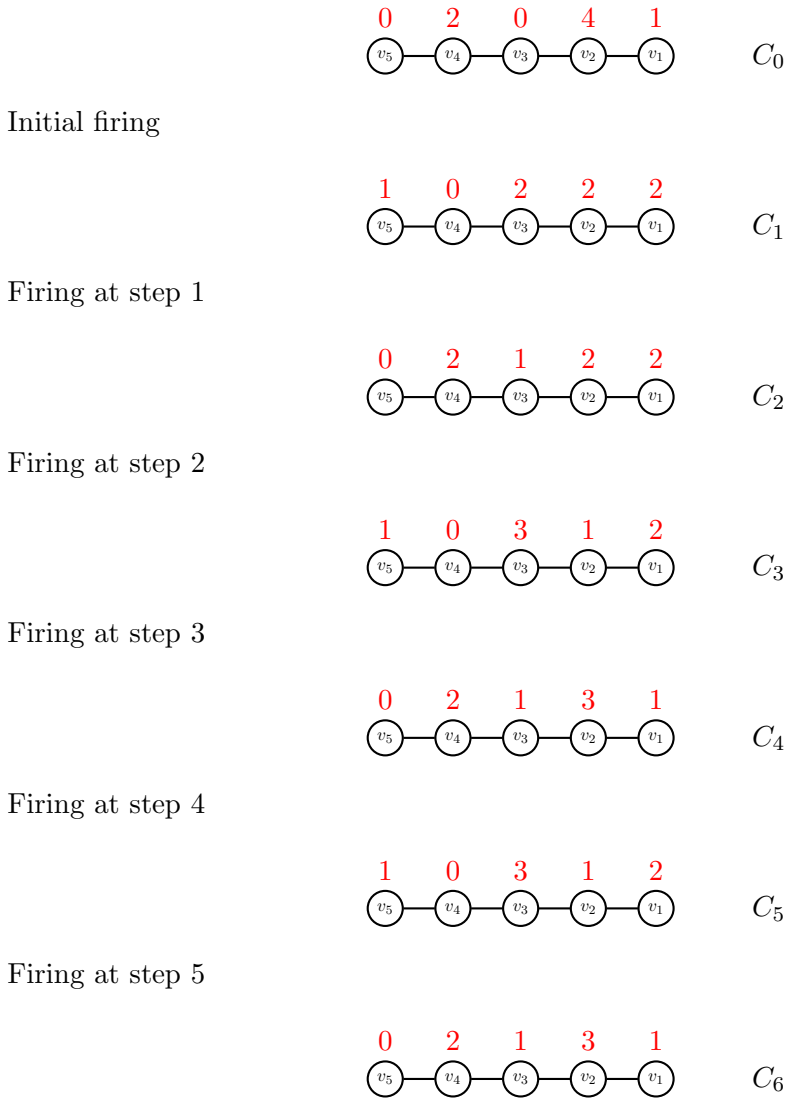
\FloatBarrier

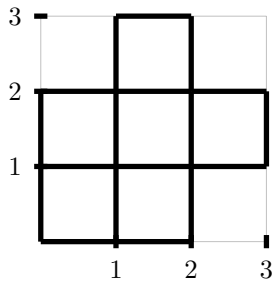
\begin{figure}[H]
	\centering	
	\begin{tikzpicture}[line width = 2pt,scale = 1.0]
	\path [draw, help lines, opacity=.5]  (0,0) grid (3,3);
	\foreach \i in {1,...,3} \draw (\i,2.5pt) -- +(0,-5pt) node [anchor=north, font=\small] {$\i$} (2.5pt,\i) -- +(-5pt,0) node [anchor=east, font=\small] {$\i$};
	\draw (0,0) -- (1,0);
	\draw (1,0) -- (1,1);
	\draw (1,1) -- (2,1);
	\draw (0,0) -- (0,1);
	\draw (0,1) -- (1,1);
	\draw (1,0) -- (2,0);
	\draw (2,0) -- (2,1);
	\draw (2,1) -- (3,1);
	\draw (2,2) -- (2,1);
	\draw (1,2) -- (2,2);
	\draw (3,1) -- (3,2);
	\draw (2,2) -- (3,2);
	\draw (0,1) -- (0,2);
	\draw (2,2) -- (2,3);
	\draw (1,3) -- (2,3);
	\draw (1,3) -- (1,2);
	\draw (1,1) -- (1,2);
	\draw (0,2) -- (1,2);

	\end{tikzpicture}
	
	\caption{Board-pile 6-omino $X$}
	\label{fig:polyominoexample1}
\end{figure}

\FloatBarrier

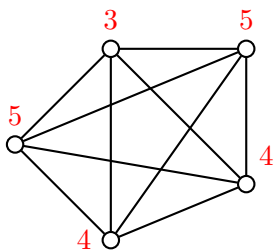
\begin{figure}[H]
	\centering
	\begin{tikzpicture}[-,-=stealth', auto,node distance=3cm,
	thick,scale=0.6, main node/.style={scale=0.6,circle,draw}]                               
	\node[main node, label={[red]90:3}] (1)          	{};						 
	\node[main node, label={[red]90:5}] (2)  [right of=1]        {};
	\node[main node, label={[red]90:5}] (3)  [below left of=1]        {};  
	\node[main node, label={[red]75:4}] (4)  [below of=2]        {};
	\node[main node, label={[red]180:4}] (5)  [below right of=3]        {}; 	
	
	\path
	(1) edge (2)
	(1) edge (3)
	(1) edge (4)
	(2) edge (3)
	(2) edge (4)
	(2) edge (5)
	(3) edge (4)
	(3) edge (5)
	(4) edge (5)
	(5) edge (1);

	\end{tikzpicture}

	\caption{Configuration on a complete graph, $K_5$.}
	\label{fig:completeconfiguration1}
\end{figure}

Unlike some other chip-firing processes like the original Chip-Firing game \cite{lovasz} and Brushing \cite{pralat}, in Parallel Diffusion it is possible for a stack size to initially be positive but to become negative as time goes on. For example if some vertex $v$ with a stack size of $n$, $n \in \mathbb{N}$, is adjacent to $n+1$ vertices, each of which having a stack size of $0$, then after firing, $v$ would have at stack size of $-1$. However in \cite{duffy}, it was shown that Parallel Diffusion is such that an addition of some constant $k$, $k \in \mathbb{Z}$, to each stack size will have no effect on determining when and if a chip will move from one vertex to another. So if one wanted to view diffusion as a process in which stack sizes are never negative, one would only need to add a sufficient constant $k$, $k \in \mathbb{N}$, to each stack size. Some results pertaining to locating an appropriate $k$ value for any given graph can be found in \cite{carlotti}.

\section{Diffusion Background}

We begin with some necessary terminology.

Let $G$ be a finite simple undirected graph with vertex set $V(G)$ and edge set $E(G)$. Let $A \subseteq E(G)$. A \textit{graph orientation} of a graph $G$ is a mixed graph obtained from $G$ by choosing an orientation ($x \to y$ or $y \to x$) for each edge $xy \in A$. We refer to the edges that are in $E(G)$\textbackslash$A$ as \textit{flat}. We refer to the assignment of either $x \to y$, $y \to x$, or flat to an edge $xy$ as $xy$'s \textit{edge orientation}.

We refer to the discrete time increments in Parallel Diffusion as \textit{steps}. The configuration of $G$ at any step $k$ is referred to as the \textit{configuration at step $k$} or the \textit{configuration at $t = k$}. We refer to the configuration at step $0$ as the initial configuration. At every step, the vertices of $V(G)$ fire. So, a step $t$ consists of both a configuration and the firing of the vertices yielding the configuration at step $t+1$. The firing of vertices at step 0 which yields the configuration at step 1 is called the \textit{initial firing}.

In Parallel Diffusion, the assigned value of a vertex, $v$, at step $t$, is referred to as its \textit{stack size at time $t$}. If the initial configuration is $C$, then the stack size at time $t$ is denoted $|v|_t^C$. This implies that $|v|^C = |v|_0^C$. We omit the superscript when the configuration is clear.

In Parallel Diffusion, given a graph $G$ and an initial configuration $C_0$, the configuration at time $t$ can be denoted by $C_t=\{(v,|v|^{C}_{t}): v\in V(G)\}$. The \textit{configuration sequence} $Seq(C_0) = (C_0,C_1,C_2, \dots)$ is the sequence of configurations that arises as the time increases. The configuration sequence clearly depends on both the initial configuration and the graph $G$. However, $G$ is omitted from the notation since it will always be clear to which graph we are referring. 

Let $Seq(C_0) = (C_0,C_1,C_2, \dots)$ be the configuration sequence on a graph $G$ with initial configuration $C_0$. A positive integer $p$ is a \textit{period length} if $C_t = C_{t+p}$ for all $t \geq N$ for some $N$. In this case, $N$ is a \textit{preperiod length}. For such a value, $N$, if $k \geq N$, then we say that the configuration, $C_k$, is \textit{inside} the period. For the purposes of this paper, all references to period length will refer to the \textit{minimum period length $p$} in a given configuration sequence. Also, all references to preperiod length will refer to the \textit{least preperiod length} that yields that minimum period length $p$ in a given configuration sequence. Given two configurations, $C$ and $D$, of a graph $G$, in which the vertices are labelled, $C$ and $D$ are \textit{equal} if $|v|^C = |v|^D$ for all $v \in V(G)$. In Figure~\ref{fig:parex}, the period length is 2 and the preperiod length is 3.

Let \textit{\dollarsign(G)} = \{$Seq(C): C$ is a configuration on $G$\}. Since the set of integers is infinite, on any graph $G$, \dollarsign(G) is an infinite set. 

In their paper \cite{long}, Long and Narayanan prove that the period length of every configuration sequence is either 1 or 2 \cite{long}. Let $\widebar{Seq(C_0)}$ denote the singleton or ordered pair of configurations contained within the period of a configuration sequence $Seq(C_0)$.

Let $C$ be a configuration on a graph $G$. Let $C+k$ denote the configuration created by adding an integer $k$ to every stack size in the configuration $C$. Two configuration sequences, $Seq(C)$ and $Seq(D)$ in $\dollarsign(G)$, are \textit{equivalent} if $\widebar{Seq}(C+k) = \widebar{Seq}(D)$ for some integer $k$. For all configurations $C$ and all integers $k$, we say that $C$ and $C+k$ are \textit{equivalent}.

We see an example of equivalent configuration sequences in Figure~\ref{fig:isomorphic}.

A configuration $D$ on a graph $G$ is a \textit{period configuration} if $D$ is in $\widebar{Seq}(C)$ for some configuration $C$. A configuration $D$ on a graph $G$ is a \textit{$p_2$-configuration} if $D$ is in  $\widebar{Seq}(C)$ for some configuration $C$ and $\widebar{Seq}(C)$ has exactly 2 elements. A configuration $D$ on a graph $G$ is a \textit{fixed configuration} if $D$ is in $\widebar{Seq}(C)$ for some configuration $C$ and $\widebar{Seq}(C)$ has exactly 1 element.

\begin{figure}[H]
	
	Configuration sequence $Seq(C_0)$
	
	\[
	\begin{tikzpicture}[-,-=stealth', auto,node distance=1.5cm,
	thick,scale=0.6, main node/.style={scale=0.6,circle,draw}]
	
	\node[main node, label={[red]90:0}] (1) 					   {$v_1$};						 
	\node[main node, label={[red]90:1}] (2)  [right of=1]        {$v_2$};
	\node[main node, label={[red]90:1}] (3)  [right of=2]        {$v_3$};  
	\node[main node, label={[red]90:1}] (4)  [right of=3]        {$v_4$};						 
	\node[main node, label={[red]90:0}] (5)  [right of=4]        {$v_5$}; 
	\node[draw=none, fill=none]         (0)  [left of=1]         {$C_0$};
	\path
	
	(2) edge (3)
	(3) edge (4); 
	
	\draw [->] (2) edge (1) 
	(4) edge (5) ;

	\end{tikzpicture} \]

	\FloatBarrier

	Firing at step 0

	\[
	\begin{tikzpicture}[-,-=stealth', auto,node distance=1.5cm,
	thick,scale=0.6, main node/.style={scale=0.6,circle,draw}]
	
	\node[main node, label={[red]90:1}] (1) 					   {$v_1$};						 
	\node[main node, label={[red]90:0}] (2)  [right of=1]        {$v_2$};
	\node[main node, label={[red]90:1}] (3)  [right of=2]        {$v_3$};  
	\node[main node, label={[red]90:0}] (4)  [right of=3]        {$v_4$};						 
	\node[main node, label={[red]90:1}] (5)  [right of=4]        {$v_5$}; 
	\node[draw=none, fill=none]         (0)  [left of=1]         {$C_1$};
	\draw [->] (1) edge (2) (3) edge (2) (3) edge (4)
	(5) edge (4) ;

	\end{tikzpicture} \]
	
	\FloatBarrier

	Firing at step 1
	
	\[
	\begin{tikzpicture}[-,-=stealth', auto,node distance=1.5cm,
	thick,scale=0.6, main node/.style={scale=0.6,circle,draw}]
	
	\node[main node, label={[red]90:0}] (1) 					   {$v_1$};						 
	\node[main node, label={[red]90:2}] (2)  [right of=1]        {$v_2$};
	\node[main node, label={[red]90:-1}] (3)  [right of=2]        {$v_3$};  
	\node[main node, label={[red]90:2}] (4)  [right of=3]        {$v_4$};						 
	\node[main node, label={[red]90:0}] (5)  [right of=4]        {$v_5$}; 
	\node[draw=none, fill=none]         (0)  [left of=1]         {$C_2$};
	\draw [->] (2) edge (1) (2) edge (3) (4) edge (3)
	(4) edge (5) ;

	\end{tikzpicture} \]
	
	\FloatBarrier
	
	Firing at step 2
	
	\[
	\begin{tikzpicture}[-,-=stealth', auto,node distance=1.5cm,
	thick,scale=0.6, main node/.style={scale=0.6,circle,draw}]
	
	\node[main node, label={[red]90:1}] (1) 					   {$v_1$};						 
	\node[main node, label={[red]90:0}] (2)  [right of=1]        {$v_2$};
	\node[main node, label={[red]90:1}] (3)  [right of=2]        {$v_3$};  
	\node[main node, label={[red]90:0}] (4)  [right of=3]        {$v_4$};						 
	\node[main node, label={[red]90:1}] (5)  [right of=4]        {$v_5$}; 
	\node[draw=none, fill=none]         (0)  [left of=1]         {$C_3$};
	\draw [->] (1) edge (2) (3) edge (2) (3) edge (4)
	(5) edge (4) ;

	\end{tikzpicture} \]
	
	\FloatBarrier

	\vspace{1cm}

	Configuration sequence $Seq(C'_0)$
	
	\[
	\begin{tikzpicture}[-,-=stealth', auto,node distance=1.5cm,
	thick,scale=0.6, main node/.style={scale=0.6,circle,draw}]
	
	\node[main node, label={[red]90:-1}] (1) 					   {$v_1$};						 
	\node[main node, label={[red]90:1}] (2)  [right of=1]        {$v_2$};
	\node[main node, label={[red]90:-2}] (3)  [right of=2]        {$v_3$};  
	\node[main node, label={[red]90:1}] (4)  [right of=3]        {$v_4$};						 
	\node[main node, label={[red]90:-1}] (5)  [right of=4]        {$v_5$}; 
	\node[draw=none, fill=none]         (0)  [left of=1]         {$C'_0$};
	\draw [->] (1) edge (2) (3) edge (2) (3) edge (4)
	(5) edge (4) ;

	\end{tikzpicture} \]
	
	\FloatBarrier
	
	Firing at step 0
	
	\[
	\begin{tikzpicture}[-,-=stealth', auto,node distance=1.5cm,
	thick,scale=0.6, main node/.style={scale=0.6,circle,draw}]
	
	\node[main node, label={[red]90:0}] (1) 					   {$v_1$};						 
	\node[main node, label={[red]90:-1}] (2)  [right of=1]        {$v_2$};
	\node[main node, label={[red]90:0}] (3)  [right of=2]        {$v_3$};  
	\node[main node, label={[red]90:-1}] (4)  [right of=3]        {$v_4$};						 
	\node[main node, label={[red]90:0}] (5)  [right of=4]        {$v_5$}; 
	\node[draw=none, fill=none]         (0)  [left of=1]         {$C'_1$};
	
	\draw [->] (2) edge (1) (2) edge (3) (4) edge (3) 
	(4) edge (5) ;

	\end{tikzpicture} \]
	
	\FloatBarrier
	
	Firing at step 1
	
	\[
	\begin{tikzpicture}[-,-=stealth', auto,node distance=1.5cm,
	thick,scale=0.6, main node/.style={scale=0.6,circle,draw}]
	
	\node[main node, label={[red]90:-1}] (1) 					   {$v_1$};						 
	\node[main node, label={[red]90:1}] (2)  [right of=1]        {$v_2$};
	\node[main node, label={[red]90:-2}] (3)  [right of=2]        {$v_3$};  
	\node[main node, label={[red]90:1}] (4)  [right of=3]        {$v_4$};						 
	\node[main node, label={[red]90:-1}] (5)  [right of=4]        {$v_5$}; 
	\node[draw=none, fill=none]         (0)  [left of=1]         {$C'_2$};
	\draw [->] (1) edge (2) (3) edge (2) (3) edge (4)
	(5) edge (4);

	\end{tikzpicture} \]
	
	\FloatBarrier
	
	Firing at step 2
	
	\[
	\begin{tikzpicture}[-,-=stealth', auto,node distance=1.5cm,
	thick,scale=0.6, main node/.style={scale=0.6,circle,draw}]
	
	\node[main node, label={[red]90:0}] (1) 					   {$v_1$};						 
	\node[main node, label={[red]90:-1}] (2)  [right of=1]        {$v_2$};
	\node[main node, label={[red]90:0}] (3)  [right of=2]        {$v_3$};  
	\node[main node, label={[red]90:-1}] (4)  [right of=3]        {$v_4$};						 
	\node[main node, label={[red]90:0}] (5)  [right of=4]        {$v_5$}; 
	\node[draw=none, fill=none]         (0)  [left of=1]         {$C'_3$};
	\draw [->] (2) edge (1) (2) edge (3) (4) edge (3)
	(4) edge (5) ;

	\end{tikzpicture} \]
	
	\FloatBarrier
	
	\caption{Two equivalent configuration sequences.}
	
	\label{fig:isomorphic}
	
\end{figure}

We conclude this section with an observation tying together Parallel Diffusion and graph orientations.

\begin{observation}\label{lem:inducego}
	In Parallel Diffusion, every configuration induces a graph orientation.
\end{observation}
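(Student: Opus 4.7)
The plan is to read the observation as a direct unpacking of definitions: a graph orientation (as defined two paragraphs earlier) is nothing more than an assignment to each edge $xy$ of one of the three labels $x\to y$, $y\to x$, or \emph{flat}, and a configuration $C$ on $G$ already supplies, through the comparison of stack sizes, exactly such an assignment.

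First I would fix an arbitrary configuration $C$ on $G$ and, for each edge $xy \in E(G)$, define its edge orientation by cases on the integers $|x|^{C}$ and $|y|^{C}$: orient $x \to y$ when $|x|^{C} > |y|^{C}$, orient $y \to x$ when $|y|^{C} > |x|^{C}$, and declare $xy$ flat when $|x|^{C} = |y|^{C}$. Next I would observe that this assignment is well defined: since $|x|^{C}$ and $|y|^{C}$ are integers, the trichotomy law puts the edge in exactly one of the three cases, so no edge receives two incompatible labels and no edge is left unlabelled. Taking $A$ to be the set of edges that are not flat, this recovers the mixed graph described in the definition of graph orientation, hence $C$ induces a graph orientation of $G$.

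I would also briefly remark that this orientation is the natural one from the firing rule: the edges oriented $x \to y$ are precisely those along which a chip is transferred from $x$ to $y$ during the firing of $C$, since the sets $Z_{-}^{C}(x)$ and $Z_{+}^{C}(x)$ from the introduction coincide with the out-neighbours and in-neighbours of $x$ in the induced orientation, while flat edges correspond to pairs of equal neighbours across which no chip moves. This ties the construction back to the dynamics rather than leaving it as a purely static assignment.

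The main obstacle is essentially cosmetic: there is no combinatorial difficulty, only the need to make sure the definition of ``graph orientation'' used here (which permits flat edges) is matched precisely, so that the case $|x|^{C}=|y|^{C}$ is accounted for rather than forcing an arbitrary choice. Once that is flagged, the observation follows immediately from trichotomy on $\mathbb{Z}$.
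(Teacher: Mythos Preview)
Your proposal is correct and matches the paper's own proof essentially verbatim: the paper also fixes a configuration and assigns to each edge $uv$ the orientation $u\to v$, $v\to u$, or flat according to whether $|u|_t>|v|_t$, $|v|_t>|u|_t$, or $|u|_t=|v|_t$. Your additional remarks on trichotomy and the link to $Z_{\pm}^{C}$ are fine elaborations but add nothing the paper requires.
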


\begin{proof}
	Let $G$ be a graph and $C_t$ a configuration on $G$. For all pairs of adjacent vertices $u$, $v$ in $G$ at step $t$, assign directions as follows:
	
	\begin{itemize}
		
		\item If $|u|_t > |v|_t$, assign $uv$ the edge orientation $u \to v$.
		
		\item If $|v|_t > |u|_t$, assign $uv$ the edge orientation $v \to u$.
		
		\item If $|u|_t = |v|_t$, do not direct the edge $uv$.
		
	\end{itemize}
	
\end{proof}

We say that this graph orientation is \textit{induced} by $C_t$. We see an example of a graph orientation induced by a configuration in Parallel Diffusion in Figure~\ref{fig:underlyingorientation}.

\begin{figure} [H] 
	\centering	
	
	\begin{tikzpicture}[-,-=stealth', auto,node distance=1.5cm,
	thick,scale=0.6, main node/.style={scale=0.6,circle,draw}]
	
	\node[main node, label={[red]90:15}] (1) 					   {$v_1$};						 
	\node[main node, label={[red]90:9}] (2)  [right of=1]        {$v_2$};
	\node[main node, label={[red]90:8}] (3)  [right of=2]        {$v_3$};  
	\node[main node, label={[red]90:2}] (4)  [right of=3]        {$v_4$};						 
	\node[main node, label={[red]90:12}] (5)  [right of=4]        {$v_5$}; 
	\node[main node, label={[red]90:}] (6) 	[below of=1]				   {$v_1$};						 
	\node[main node, label={[red]90:}] (7)  [right of=6]        {$v_2$};
	\node[main node, label={[red]90:}] (8)  [right of=7]        {$v_3$};  
	\node[main node, label={[red]90:}] (9)  [right of=8]        {$v_4$};						 
	\node[main node, label={[red]90:}] (10)  [right of=9]        {$v_5$}; 	
	
	\path (1) edge (2) (2) edge (3) (3) edge (4) (4) edge (5);
	
	\draw [->] (6) edge (7) (7) edge (8) (8) edge (9)
	(10) edge (9) ;

	\end{tikzpicture} 
	\FloatBarrier
	
	\caption{Configuration on $P_5$ and its induced graph orientation.}
	\label{fig:underlyingorientation}

\end{figure}
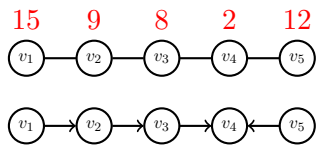

\section{Polyominoes}

We will now introduce polyominoes. The following concepts are from David Klarner's paper \cite{klarner}, reworded slightly to improve the clarity of our results.

\begin{definition} \cite{klarner}
	A \textbf{polyomino} is a plane figure composed of a number of connected unit squares joined edge on edge. A polyomino with exactly $n$ unit squares is called an \textbf{n-omino}.
\end{definition}

\begin{definition} \cite{klarner}
	In a polyomino $X$, a \textbf{horizontal strip}, or \textbf{h-strip}, is a maximal rectangle of height one. 
\end{definition}

By convention, we will set each $h$-strip in the plane so that its height spans from an integer $k$ to $k+1$.

\begin{definition} \cite{klarner}
	The infinite area enclosed by the lines $y=k$ and $y=k+1$ is called a \textbf{row}.	
\end{definition}	

\begin{definition} \cite{klarner}
	A \textbf{board-pile polyomino} is a polyomino which has at most one $h$-strip per row. A board-pile polyomino with $n$ unit squares is called a \textbf{board-pile n-omino}.
\end{definition}

With this, we can now begin to prove that there exists a bijection between the number of board-pile $n$-ominoes and the number of period configurations of an unlabelled complete graph on $n$ vertices. To accomplish this, we will first develop a notation for polyominoes that will eliminate the necessity of a pictorial representation, then define a mapping from the set of all polyominoes on $n$ unit squares to the set of all period configurations of (an unlabelled) $K_n$ up to equivalence, and then show that mapping to be a bijection.

Given a polyomino $X$, we will use the convention of labelling the h-strips from bottom to top as $S_1$, $S_2$, $\dots$, $S_N$, where $N$ is the number of h-strips in $X$. Let $S(X)$ be the set of all h-strips in $X$.

A board-pile polyomino $X$, can be represented as an ordered set of ordered pairs of the form 
$X = \{ (d_1, |S_1|), (d_2, |S_2|), (d_3, |S_3|), \dots (d_N, |S_{N}|)\}$, where $|S_i|$ is the number of unit squares in the h-strip $S_i$, and $d_i$ is the difference between the greatest $x$-coordinate in $S_i$ and the least $x$-coordinate in $S_{i-1}$. By convention, $d_1 = 0$. See Figure~\ref{fig:polyominoexample} for an example.

\begin{figure}[H]
	\centering	
	\begin{tikzpicture}[line width = 2pt,scale = 1.0]
	\path [draw, help lines, opacity=.5]  (0,0) grid (3,3);
	\foreach \i in {1,...,3} \draw (\i,2.5pt) -- +(0,-5pt) node [anchor=north, font=\small] {$\i$} (2.5pt,\i) -- +(-5pt,0) node [anchor=east, font=\small] {$\i$};
	\draw (0,0) -- (1,0);
	\draw (1,0) -- (1,1);
	\draw (1,1) -- (2,1);
	\draw (0,0) -- (0,1);
	\draw (0,1) -- (1,1);
	\draw (1,0) -- (2,0);
	\draw (2,0) -- (2,1);
	\draw (2,1) -- (3,1);
	\draw (2,2) -- (2,1);
	\draw (1,2) -- (2,2);
	\draw (3,1) -- (3,2);
	\draw (2,2) -- (3,2);
	\draw (0,1) -- (0,2);
	\draw (2,2) -- (2,3);
	\draw (1,3) -- (2,3);
	\draw (1,3) -- (1,2);
	\draw (1,1) -- (1,2);
	\draw (0,2) -- (1,2);
	
	\draw[pattern=north west lines] (0,0) rectangle (2,1);
	
	\draw[pattern=dots] (0,1) rectangle (3,2);	
	
	\draw[pattern=grid] (1,2) rectangle (2,3);

	\node[draw=none] at (-0.5,0.5) {$S_1$};
	\node[draw=none] at (-0.5,1.5) {$S_2$};
	\node[draw=none] at (-0.5,2.5) {$S_3$};
	\end{tikzpicture}

	$$
	X = \{ (0, 2), (3, 3), (2, 1)\}
	$$
	\\
	$$
	S(X) = \{S_1, S_2, S_3\}
	$$
	
	\caption{Board-pile 6-omino $X$ with shading differentiating between $S_1$, $S_2$, and $S_3$.}
	\label{fig:polyominoexample}
\end{figure}

\FloatBarrier

\section{Period Configurations on Unlabelled Complete Graphs}\label{sec:p_2config}

In this section, we will be counting the number of period configurations that exist on unlabelled complete graphs.

A configuration, $C$, of an unlabelled $K_n$, $n \geq 1$, can be represented by a multi-set of cardinality $n$ with the stack sizes as elements. We will use the notation $a^k$ to represent $k$ instances of stack size $a$ in the configuration. An example is shown in Figure~\ref{fig:completeconfiguration}. 

\begin{figure}[H]
	\centering
	\begin{tikzpicture}[-,-=stealth', auto,node distance=3cm,
	thick,scale=0.6, main node/.style={scale=0.6,circle,draw}]                               
	\node[main node, label={[red]90:3}] (1)          	{};						 
	\node[main node, label={[red]90:5}] (2)  [right of=1]        {};
	\node[main node, label={[red]90:5}] (3)  [below left of=1]        {};  
	\node[main node, label={[red]75:4}] (4)  [below of=2]        {};
	\node[main node, label={[red]180:4}] (5)  [below right of=3]        {}; 	
	
	\path
	(1) edge (2)
	(1) edge (3)
	(1) edge (4)
	(2) edge (3)
	(2) edge (4)
	(2) edge (5)
	(3) edge (4)
	(3) edge (5)
	(4) edge (5)
	(5) edge (1);

	\end{tikzpicture}
	
	\begin{align*}
		C &= \{3,4,4,5,5\}\\
		  &= \{3,4^2,5^2\}
	\end{align*}

	\caption{Configuration on an unlabelled complete graph}
	\label{fig:completeconfiguration}
\end{figure}

Given $n$, define a map $f$ from the set of all board-pile $n$-ominoes to the set of all complete graph configurations on $K_n$. We will show that only period configurations of $K_n$ exist in the range of $f$ and that $f$ is a bijection. 

For a board-pile $n$-omino $X = \{ (d_1, |S_1|), (d_2, |S_2|), (d_3, |S_3|), \dots (d_N, |S_{N}|)\}$, let 

\begin{align*}
f(X) &=
 f(\{ (d_1, |S_1|), (d_2, |S_2|), (d_3, |S_3|), \dots (d_n, |S_{N}|)\})\\ &= \Big\{0^{|S_1|},\Big( \sum_{i=1}^{2} d_{i} \Big) ^{|S_2|}, \Big( \sum_{i=1}^{3} d_{i} \Big) ^{|S_3|}, \dots \Big( \sum_{i=1}^{N} d_{i} \Big) ^{|S_{N}|} \Big\}.
\end{align*}

For all $S_k \in S(X)$, the complete graph configuration $f(X)$ has $|S_k|$ vertices, each of which contain $\sum_{i=1}^{k} d_{i}$ chips.

We denote the set of vertices in $f(X)$ corresponding to the strip $S_k$ to be $V_k$ for all $k \leq N$. An example of this mapping is shown in Figure~\ref{fig:completemappingexample}.

\begin{figure}[H]
	\centering	
	\begin{tikzpicture}[line width = 2pt,scale = 1.0]
	\path [draw, help lines, opacity=.5]  (0,0) grid (4,4);
	\foreach \i in {1,...,4} \draw (\i,2.5pt) -- +(0,-5pt) node [anchor=north, font=\small] {$\i$} (2.5pt,\i) -- +(-5pt,0) node [anchor=east, font=\small] {$\i$};
	\draw (1,0) -- (2,0);
	\draw (2,0) -- (2,1);
	\draw (2,1) -- (3,1);
	\draw (1,0) -- (1,1);
	\draw (1,1) -- (2,1);
	\draw (2,0) -- (3,0);
	\draw (3,0) -- (3,1);
	\draw (3,1) -- (4,1);
	\draw (3,2) -- (3,1);
	\draw (2,2) -- (3,2);
	\draw (4,1) -- (4,2);
	\draw (3,2) -- (4,2);
	\draw (3,2) -- (3,3);
	\draw (2,3) -- (3,3);
	\draw (2,3) -- (2,2);
	\draw (2,1) -- (2,2);
	\draw (1,2) -- (2,2);
	\draw (0,2) -- (0,3);
	\draw (0,2) -- (1,2);
	\draw (0,3) -- (1,3);
	\draw (1,2) -- (1,3);
	\draw (1,3) -- (2,3);
	\draw (1,3) -- (1,4);
	\draw (1,4) -- (2,4);
	\draw (2,3) -- (2,4);
	\draw (2,4) -- (3,4);
	\draw (3,4) -- (3,3);
	\draw (3,3) -- (2,3);
	\draw (3,3) -- (4,3);
	\draw (4,2) -- (4,3);											
	\node[draw=none] at (-0.5,0.5) {$S_1$};
	\node[draw=none] at (-0.5,1.5) {$S_2$};
	\node[draw=none] at (-0.5,2.5) {$S_3$};
	\node[draw=none] at (-0.5,3.5) {$S_4$};
	\end{tikzpicture}

	$$
	X = \{ (0, 2), (3, 2), (2, 4), (3, 2)\}
	$$

	\begin{tikzpicture}[-,-=stealth', auto,node distance=3cm,
	thick,scale=0.6, main node/.style={scale=0.6,circle,draw}]                               
	\node[main node, label={[red]90:0}] (1)          	{$v_1$};						 
	\node[main node, label={[red]90:0}] (2)  [right of=1]        {$v_2$};
	\node[main node, label={[red]90:3}] (3)  [below left of=1]        {$v_3$};  
	\node[main node, label={[red]90:3}] (4)  [below right of=2]        {$v_4$};
	\node[main node, label={[red]180:5}] (5)  [below left =0.9cm and 0.2cm of 3]        {$v_5$}; 	
	\node[main node, label={[red]0:5}] (6)  [below right =0.9cm and 0.2cm of 4]        	{$v_6$};						 
	\node[main node, label={[red]270:5}] (7)  [below right =0.9cm and 0.2cm of 5]        {$v_7$};
	\node[main node, label={[red]270:5}] (8)  [below left =0.9cm and 0.2cm of 6]        {$v_8$};  
	\node[main node, label={[red]270:8}] (9)  [below left of=8]        {$v_9$};
	\node[main node, label={[red]270:8}] (10)  [below right of=7]        {$v_{10}$};	
	
	\path
	(1) edge (2)
	(1) edge (3)
	(1) edge (4)
	(2) edge (3)
	(2) edge (4)
	(2) edge (5)
	(3) edge (4)
	(3) edge (5)
	(4) edge (5)
	(5) edge (1)
	(1) edge (6)
	(1) edge (7)
	(1) edge (8)
	(1) edge (9)
	(1) edge (10)
	(2) edge (6)
	(2) edge (7)
	(2) edge (8)
	(2) edge (9)
	(2) edge (10)
	(3) edge (6)
	(3) edge (7)
	(3) edge (8)
	(3) edge (9)
	(3) edge (10)
	(4) edge (6)
	(4) edge (7)
	(4) edge (8)
	(4) edge (9)
	(4) edge (10)
	(5) edge (6)
	(5) edge (7)
	(5) edge (8)
	(5) edge (9)
	(5) edge (10)
	(6) edge (7)
	(6) edge (8)
	(6) edge (9)
	(6) edge (10)
	(7) edge (8)
	(7) edge (9)
	(7) edge (10)
	(8) edge (9)
	(8) edge (10)
	(9) edge (10);

	\end{tikzpicture}
	
	\begin{align*}
	f(X) &= \{0^2, (0+3)^2, (0+3+2)^4, (0+3+2+3)^2\}  
	\\&= \{0^2,3^2,5^4,8^2\}
	\end{align*}
	
	\caption{Mapping a board-pile 10-omino to its corresponding configuration of $K_{10}$.}
	\label{fig:completemappingexample}
\end{figure}

We will now use $f$ to show that the number of period configurations of $K_n$ is equal to the number of board-pile polyominoes containing exactly $n$ unit squares.

\begin{lemma} \label{lem:bpplength}
	Let
	\noindent$X = \{ (d_1, |S_1|), (d_2, |S_2|), (d_3, |S_3|), \dots (d_N, |S_N|)\}$ be a board-pile polyomino with exactly $N$ h-strips. If $1 \leq i \leq N-1$, then $1 \leq d_{i+1} \leq |S_i| + |S_{i+1}| - 1$.
\end{lemma}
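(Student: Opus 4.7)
The plan is to translate the inequality into a geometric statement about the horizontal overlap of two consecutive h-strips, and then to observe that connectedness of the polyomino forces exactly the required bounds.

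First, I would fix coordinates. Let $a_i$ denote the least $x$-coordinate of a unit square in $S_i$, so $S_i$ occupies the squares with $x$-coordinates from $a_i$ to $a_i+|S_i|-1$ (i.e., the strip is the rectangle $[a_i,a_i+|S_i|]\times[i-1,i]$). Then by the definition of $d_{i+1}$ in the encoding above,
\[
d_{i+1}=(a_{i+1}+|S_{i+1}|)-a_i.
\]
This is the key identity linking the combinatorial parameter $d_{i+1}$ to the coordinates of $S_i$ and $S_{i+1}$.

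Next I would invoke that $X$ is a polyomino, hence connected. Since $S_i$ and $S_{i+1}$ are the unique h-strips in adjacent rows $[i-1,i]$ and $[i,i+1]$, the only way the two rows can be connected to each other through the polyomino is if $S_i$ and $S_{i+1}$ share at least one edge along the line $y=i$. Equivalently, the horizontal $x$-intervals $[a_i,a_i+|S_i|]$ and $[a_{i+1},a_{i+1}+|S_{i+1}|]$ must overlap in an interval of length at least $1$. That gives two inequalities:
\[
a_{i+1}+|S_{i+1}|-a_i\ge 1\qquad\text{and}\qquad a_i+|S_i|-a_{i+1}\ge 1.
\]

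Finally, I would translate these into the claimed bounds on $d_{i+1}$. The first inequality is exactly $d_{i+1}\ge 1$. For the upper bound, the second inequality gives $a_{i+1}\le a_i+|S_i|-1$, so
\[
d_{i+1}=(a_{i+1}+|S_{i+1}|)-a_i\le (a_i+|S_i|-1+|S_{i+1}|)-a_i=|S_i|+|S_{i+1}|-1,
\]
which closes the proof. There is no real obstacle here; the only subtle point is being explicit that a board-pile polyomino must be connected through consecutive h-strips (no stacking of two strips in the same row is possible, so the connection between row $i-1$-and-below and row $i$-and-above passes through the single edge-overlap between $S_i$ and $S_{i+1}$), which is immediate from the definition of board-pile.
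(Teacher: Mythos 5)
Your proof is correct and follows essentially the same approach as the paper: connectedness of the board-pile polyomino forces consecutive h-strips to overlap edge-on-edge, and the two resulting overlap inequalities translate directly into the bounds on $d_{i+1}$. Your coordinate formulation $d_{i+1}=(a_{i+1}+|S_{i+1}|)-a_i$ is in fact cleaner and more explicit than the paper's argument, which states the same geometric facts informally (and with some index slips).
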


\begin{proof}
	Since $S_i$ and $S_{i+1}$ are adjacent h-strips, and polyominoes, by definition, are joined edge on edge, the distance from the least $x$-coordinate of $S_i$ to the greatest $x$-coordinate of $S_{i+1}$ must be less than the sum of the two lengths ($|S_i| + |S_{i+1}|$). So, $d_{i+1}$ must be less than $|S_i| + |S_{i+1}|$. Since $d_{i+1}$ is equal to the difference between the greatest $x$-coordinate in $S_i$ and the least $x$-coordinate in $S_{i-1}$, and since $S_i$ and $S_{i-1}$ are connected edge on edge, $d \geq 1$. Thus, we conclude
	$1 \leq d_{i+1} \leq |S_i| + |S_{i+1}| - 1$.
\end{proof}

\begin{theorem}\label{thm:polyominoisomorphism}
	For all $n \geq 1$, there exists a bijection between the set of all board-pile $n$-ominoes and the set of all period configurations on unlabelled complete graphs.
\end{theorem}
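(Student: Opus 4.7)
My plan is to show that the map $f$ is well-defined, injective, and surjective when the codomain is the set of period configurations of $K_n$ taken modulo the additive-shift equivalence of Section~2; by construction $f$ always outputs the representative whose minimum stack size is $0$.

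I would first verify that $f(X)$ is a period configuration by firing it twice and checking the original returns. Writing $f(X)=\{a_1^{m_1},\dots,a_N^{m_N}\}$ with $a_k=\sum_{i=1}^{k}d_i$, $m_k=|S_k|$, $R_k=\sum_{j>k}m_j$, and $L_k=\sum_{j<k}m_j$, one firing sends each vertex in the $k$-th group to stack size $b_k=a_k+R_k-L_k$. A short computation gives $b_{k+1}-b_k=d_{k+1}-m_k-m_{k+1}\le -1$ by Lemma~\ref{lem:bpplength}, so the groups remain intact but their stack-size ordering is fully reversed: $b_N<b_{N-1}<\cdots<b_1$. A second firing then gives each vertex in the $k$-th group a gain of $L_k$ and loss of $R_k$, returning its stack to $a_k$. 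Hence $f(X)$ is a period configuration. Injectivity is immediate: $f(X)$ as a multiset determines its distinct values and their multiplicities, which recover $|S_k|$, $d_1=0$, and $d_{k+1}=a_{k+1}-a_k$.

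For surjectivity, given a period configuration $C=\{a_1^{m_1},\dots,a_N^{m_N}\}$ with $a_1=0<a_2<\cdots<a_N$, I set $|S_k|=m_k$, $d_1=0$, $d_{k+1}=a_{k+1}-a_k$, and need $1\le d_{k+1}\le m_k+m_{k+1}-1$ so that the data defines a board-pile $n$-omino via Lemma~\ref{lem:bpplength}. The lower bound is immediate. For the upper bound I would use two properties of period configurations on $K_n$. First, firing on $K_n$ can only coarsen the equal-stack partition (two vertices in the same group of $C$ share identical neighborhoods in $K_n$ and hence identical gain and loss), so in a period configuration—whose period is $1$ or $2$ by Long--Narayanan \cite{long}—the equal-stack partition is preserved under firing. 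This rules out $d_{k+1}=m_k+m_{k+1}$, which would merge $V_k$ with $V_{k+1}$. Second, requiring that firing twice returns $C$ exactly translates, after cancellation in $b_k+R_k'-L_k'=a_k$, to the identity $L_k'=R_k$ for every $k$, where $L_k'$ is the total mass of groups with $b_j<b_k$. A short induction, starting from $k=1$ (where $L_1'=R_1=n-m_1$ forces $V_1$ to have the largest $b$) and proceeding through $k=2,3,\dots$, pins down the induced permutation of groups as the complete reversal. This rules out $d_{k+1}>m_k+m_{k+1}$, since reversal forces $b_{k+1}<b_k$.

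The main obstacle is the second property in the surjectivity step: showing that only the full reversal of group order is compatible with double-firing returning $C$. Once both properties are in hand, they combine to give exactly the polyomino inequality, so the resulting $X$ is a board-pile $n$-omino with $f(X)=C$, completing the bijection.
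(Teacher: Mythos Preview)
Your proposal is correct and follows the same overall strategy as the paper---show that $f$ lands in the period configurations of $K_n$ and is a bijection onto them---but the execution differs in both halves. For the forward direction, the paper argues by induction on the number of $h$-strips, interpreting one firing as ``flipping'' the polyomino about a horizontal axis; you instead compute $b_{k+1}-b_k=d_{k+1}-m_k-m_{k+1}\le -1$ directly from Lemma~\ref{lem:bpplength}, which gives the full reversal in one line and makes the second firing transparently undo the first. For surjectivity, the paper's contradiction step simply \emph{asserts} that in a period configuration the group order must reverse after firing (the sentence ``Since $C$ is a period configuration, we know that $(\sum_{i=1}^{j}d_i)+|V_{j+1}|+r>(\sum_{i=1}^{j+1}d_i)-|V_j|+r$'') and reads off $d_{j+1}\le |V_j|+|V_{j+1}|-1$ from that; you actually \emph{prove} this reversal, first observing that the equal-stack partition cannot refine under firing (hence is preserved in a period), and then deriving $L'_k=R_k$ from the period-$2$ return condition, whose strict monotonicity in $k$ forces $b_1>b_2>\cdots>b_N$. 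Your route is slightly longer but supplies justification for a step the paper leaves implicit; the paper's inductive ``flip'' picture, on the other hand, gives a nice geometric interpretation of what firing does to $f(X)$ that your purely arithmetic computation hides.
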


\begin{proof}
	This proof will amount to proving two separate statements: For any board-pile polyomino, $X$, on $n$ unit squares, $n \geq 1$, $f(X)$ is a period configuration of $K_n$, and for any period configuration, $C$, of $K_n$, there is some board-pile polyomino $X$ on $n$ unit squares such that $C = f(X)$.  
	We will first suppose that $X$ is a board-pile $n$-omino and reach that $f(X)$ is a period configuration of the complete graph $K_n$. 
	
	A board-pile polyomino with only a single h-strip maps trivially to a period configuration of $K_n$. In this case, every stack size is equal to 0. Therefore, the configuration is a period configuration with a period length of 1.  
	We will now show that any board-pile polyomino with only two h-strips maps to a period configuration.
	Let $X$ be such a board-pile polyomino. These two h-strips, $S_1$ and $S_2$, map to two sets of vertices, $V_1$ and $V_2$, with distinct stack sizes, $d_1 = 0$ and $d_2$ respectively. We know from Lemma~\ref{lem:bpplength} that in $f(X)$, the vertices of $V_2$ must have between 1 and $|S_1| + |S_2| - 1$ chips. Hence, the vertices of $V_2$ have $d_2$ chips with $1 \leq d_2 \leq |S_1| + |S_2| - 1$. After the initial firing, the vertices of $V_1$ will each have $|S_2|$ chips, having just received from $|S_2|$ richer neighbours, and the vertices of $V_2$ will each have $d_2 - |S_1|$ chips, having just sent to $|S_1|$ poorer neighbours. A result stated in \cite{duffy} and proven in \cite{mullen} states that an integer value can added to or subtracted from each stack size of a configuration without changing the behaviour of the process. So we can normalize the resulting configuration by subtracting $d_2 - |S_1|$ from both totals, leaving $|S_1| + |S_2| - d_2$ chips on each of the vertices in $V_1$ and leaving 0 chips on each of the vertices in $V_2$. Note that since the $x$-distance from the least $x$-coordinate of $S_1$ to the greatest $x$-coordinate of $S_2$ is $d_2$, then the $x$-distance from the least $x$-coordinate of $S_2$ to the greatest $x$-coordinate of $S_1$ must, when added to $d_2$, equal the sum of the two strip lengths. So, the $x$-distance from the least $x$-coordinate of $S_2$ to the greatest $x$-coordinate of $S_1$ is $|S_1| + |S_2| - d_2$. To show that the relative sizes have changed and that in the second firing, the vertices of $V_1$ will send chips to the vertices of $V_2$, we must show that $|S_2| > d_2 - |S_1|$. We know that the maximum value that $d_2$ can take on is $|S_1| + |S_2| - 1$.
	So, 
	
	\begin{align*}
	|S_1| + |S_2| - 1 &\geq d_2\\
	|S_2| &\geq d_2 - |S_1| + 1\\
	|S_2| &> d_2 - |S_1|  
	\end{align*}
	
	Thus, we can conclude that the vertices of $V_1$ are now richer than the vertices of $V_2$.

	This gives us that the configuration following the initial firing, call it $[f(X)]'$, is itself equal to $f(X')$ for some board-pile n-omino $X'$. In fact, $X'$ is the board-pile $n$-omino created by reflecting $X$ about the horizontal axis since $[f(X)]'$ represents an interchange of the relative stack sizes of the two sets of vertices, $V_1$ and $V_2$, corresponding to the two h-strips in $X$ (See Figure~\ref{fig:2stripexample}). So, after the second firing, the vertices of $V_2$ will have $|S_1|$ chips and the vertices of $V_1$ will have $|S_1| - d_2$ chips. By adding $d_2 - |S_1|$ to both totals (to counteract our subtracting of $d_2 - |S_1|$ chips previously), we get back where we started with the vertices of $V_1$ having 0 chips and the vertices of $V_2$ having $d_2$ chips. So, we have that $f(X)$ is a period configuration.

	\begin{figure}[H]
		\centering
		\begin{tikzpicture}[line width = 2pt,scale = 1.0]
		\path [draw, help lines, opacity=.5]  (0,0) grid (5,5);
		\foreach \i in {1,...,5} \draw (\i,2.5pt) -- +(0,-5pt) node [anchor=north, font=\small] {$\i$} (2.5pt,\i) -- +(-5pt,0) node [anchor=east, font=\small] {$\i$};
		\draw (1,0) -- (1,1);
		\draw (1,1) -- (2,1);
		\draw (0,1) -- (1,1);
		\draw (1,0) -- (2,0);
		\draw (2,0) -- (2,1);
		\draw (2,1) -- (3,1);
		\draw (2,2) -- (2,1);
		\draw (1,2) -- (2,2);
		\draw (3,1) -- (3,2);
		\draw (2,2) -- (3,2);
		\draw (0,1) -- (0,2);
		\draw (1,1) -- (1,2);
		\draw (0,2) -- (1,2);
		\draw (2,0) -- (3,0);
		\draw (3,0) -- (3,1);
		\draw (3,1) -- (4,1);
		\draw (4,1) -- (5,1);
		\draw (4,1) -- (4,2);
		\draw (5,1) -- (5,2);
		\draw (3,2) -- (4,2);
		\draw (4,2) -- (5,2);
		\node[draw=none] at (-0.5,0.5) {$S_1$};
		\node[draw=none] at (-0.5,1.5) {$S_2$};
		\node[draw=none] at (1.5,0.5) {$0$};
		\node[draw=none] at (2.5,0.5) {$0$};
		\node[draw=none] at (0.5,1.5) {$4$};
		\node[draw=none] at (1.5,1.5) {$4$};
		\node[draw=none] at (2.5,1.5) {$4$};
		\node[draw=none] at (3.5,1.5) {$4$};
		\node[draw=none] at (4.5,1.5) {$4$};
		\end{tikzpicture}
		
		\begin{tikzpicture}[line width = 2pt,scale = 1.0]
		\path [draw, help lines, opacity=.5]  (0,0) grid (5,5);
		\foreach \i in {1,...,5} \draw (\i,2.5pt) -- +(0,-5pt) node [anchor=north, font=\small] {$\i$} (2.5pt,\i) -- +(-5pt,0) node [anchor=east, font=\small] {$\i$};
		\draw (1,0) -- (1,1);
		\draw (1,1) -- (2,1);
		\draw (0,1) -- (1,1);
		\draw (1,0) -- (2,0);
		\draw (2,0) -- (2,1);
		\draw (2,1) -- (3,1);
		\draw (2,2) -- (2,1);
		\draw (1,2) -- (2,2);
		\draw (3,1) -- (3,2);
		\draw (2,2) -- (3,2);
		\draw (5,0) -- (5,1);
		\draw (1,1) -- (1,2);
		\draw (4,1) -- (5,1);
		\draw (2,0) -- (3,0);
		\draw (3,0) -- (3,1);
		\draw (3,1) -- (4,1);
		\draw (4,0) -- (4,1);
		\draw (4,0) -- (5,0);
		\draw (3,0) -- (4,0);
		\draw (0,0) -- (1,0);
		\draw (0,0) -- (0,1);
		\node[draw=none] at (-0.5,0.5) {$S_2$};
		\node[draw=none] at (-0.5,1.5) {$S_1$};
		\node[draw=none] at (1.5,1.5) {$5$};
		\node[draw=none] at (2.5,1.5) {$5$};
		\node[draw=none] at (0.5,0.5) {$2$};
		\node[draw=none] at (1.5,0.5) {$2$};
		\node[draw=none] at (2.5,0.5) {$2$};
		\node[draw=none] at (3.5,0.5) {$2$};
		\node[draw=none] at (4.5,0.5) {$2$};
		\end{tikzpicture}
		
		\caption{Board-pile with two strips ``flipping" upon the firing of the related complete graph configuration.}	
		\label{fig:2stripexample}
	\end{figure}
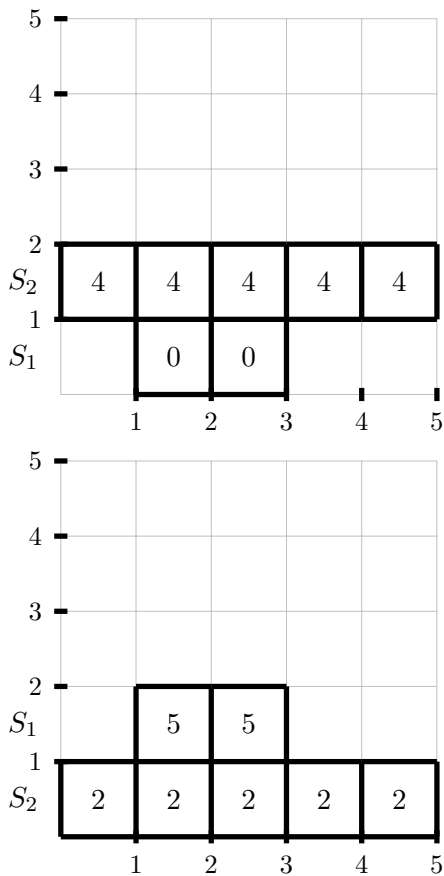
	
	\FloatBarrier

	We will use this case as the basis of an induction. We will induct on the number of h-strips to show that for all board-pile $n$-ominoes, $X$, the initial firing of $f(X)$ yields $f(X')$ (up to an addition of a constant to each of the stack sizes) where $X'$ is the board-pile n-omino created by reflecting $X$ about the horizontal axis. This will imply that $f(X)$ is a period configuration of some complete graph because two firings will return the original configuration.
	
	Our inductive hypothesis is that for all board-pile polyominoes $X$ with at most $k$ h-strips, the initial firing of $f(X)$ yields $f(X')$ (up to an addition of a constant to each of the stack sizes) where $X'$ is the board-pile polyomino created by reflecting $X$ about the horizontal axis. 
	
	Now suppose we have a board-pile polyomino, $Y$, with exactly $k+1$ h-strips. Then, $f(Y)$ is some configuration of some complete graph. Thus, by Lemma~\ref{lem:bpplength}, $d_{k+1}$ is bounded so that $1 \leq d_{k+1} \leq |S_{k}| + |S_{k+1}| - 1$. We know by our inductive hypothesis that if $S_{k+1}$ were removed from $Y$, that the resulting polyomino would map to a period configuration of some complete graph. Also, the vertices of $V_{k+1}$ in $f(Y)$ will act as source vertices enriching every other vertex upon firing. The addition of such a source strip cannot affect the relative stack sizes of the other vertices within the graph.
	We know from our base case that the board-pile polyomino $Z$ composed of just the squares of $S_k$ and $S_{k+1}$ is such that $f(Z') = [f(Z)]'$. Therefore, since the effect of the vertices in $V(f(Y)) \setminus V(f(Z))$ will enrich the remaining stack sizes of the graph equally upon firing, we have that $Y$ satisfies our criteria with $f(Y')=[f(Y)]'$.

	We now present the second portion of the proof where we begin by supposing that $C$ is a period configuration of $K_n$ and reach that there exists some board-pile $n$-omino $X$ such that $C = f(X)$.

	Suppose the vertices of $C$ have $N$ distinct stack sizes
	
	\begin{align*}
	\{0^{a_1},(\sum_{i=1}^{2} d_{i})^{a_2},(\sum_{i=1}^{3} d_{i})^{a_3}, \dots (\sum_{i=1}^{N} d_{i})^{a_N}\}
	\end{align*}

	\noindent in ascending order for some set $a_1, a_2, \dots, a_N \in \mathbb{N}$ and some set $d_1=0, d_2, d_3, \dots, d_N$ with $d_2, \dots, d_N \in \mathbb{N}.$ Let $V_j$ be the set of all vertices in $C$ with stack size $(\sum_{i=1}^{j} d_{i})^{a_j}$ for all $j \leq N$. Let $X$ be a collection of $N$ h-strips on a plane with exactly one h-strip per row for $y=1,2, \dots, N$. Call these h-strips $S_1, S_2, \dots, S_N$, containing $|V_1|, |V_2|, \dots, |V_N|$ unit squares, respectively, with $S_i$ spanning y-coordinates $i-1$ to $i$ for all $i \leq N$. Arrange these strips so that $d_j$ is equal to the $x$-distance from the leftmost coordinate of $S_{j-1}$ to the rightmost coordinate of $S_j$.
	$X$ is a board-pile polyomino if and only if it is a connected plane figure. 
	
	In particular, we must prove that $d_{j+1} \leq |V_j| + |V_{j+1}| - 1$ for all $j$. This will imply that every pair of strips $|S_j|$ and $|S_{j+1}|$ are connected edge on edge, proving that a single board-pile polyomino will result rather than a number of disconnected board-pile polyominoes in the plane.

	By contradiction, suppose $d_{j+1} > |V_j| + |V_{j+1}| - 1$ for some $j$. Then, following the initial firing, the vertices of $V_j$ would each have $(\sum_{i=1}^{j} d_{i}) + |V_{j+1}| + r$ chips, where $r$ represents the difference between the number of vertices richer than those in $V_{j+1}$ and the number of vertices poorer than those in $V_j$. Also, following the initial firing, the vertices of $V_{j+1}$ would each have $(\sum_{i=1}^{j+1} d_{i}) - |V_j| + r$ chips. Since $C$ is a period configuration, we know that $(\sum_{i=1}^{j} d_{i}) + |V_{j+1}| + r > (\sum_{i=1}^{j+1} d_{i}) - |V_i| + r$. But, this implies that $d_{j+1} < |V_{j+1}| + |V_{j}|$ which implies that $d_{j+1} \leq |V_{j+1}| + |V_j| - 1$ 
	which contradicts our assumption. So, for all period configurations, $C$, $C = f(X)$, for some board-pile polyomino $X$.
\end{proof}

Since we have shown $f$ to be bijective, we can now count the number of period configurations of unlabelled complete graphs with a given number of vertices by using previous results regarding board-pile polyominoes.

\begin{corollary}
	The number of period configurations of a complete graph on $n$ vertices follows the recurrence relation $a_n = 5a_{n-1} - 7a_{n-2} + 4a_{n-3}$ for $n \geq 5$ with initial values $a_1 = 1$, $a_2 = 2$, $a_3 = 6$, and $a_4 = 19$.
\end{corollary}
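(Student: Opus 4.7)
The plan is to use Theorem~\ref{thm:polyominoisomorphism} to rephrase the claim as an enumeration of board-pile $n$-ominoes, and then to establish the generating function
\[
B(x) \;=\; \sum_{n \geq 1} a_n x^n \;=\; \frac{x(1-x)^3}{1 - 5x + 7x^2 - 4x^3}.
\]
Once this closed form is in hand, clearing the denominator gives a polynomial of degree $4$ on the right, so reading off the coefficient of $x^n$ for $n \geq 5$ yields $a_n - 5a_{n-1} + 7a_{n-2} - 4a_{n-3} = 0$, which is the claimed recurrence. The four initial values I would verify by direct enumeration: for each composition $(\ell_1, \ldots, \ell_N)$ of $n$, Lemma~\ref{lem:bpplength} shows that the number of board-pile $n$-ominoes with that sequence of $h$-strip lengths equals $\prod_{i=2}^{N} (\ell_{i-1} + \ell_i - 1)$, and summing over compositions of $n \leq 4$ reproduces $1, 2, 6, 19$.

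To obtain $B(x)$ itself, I would decompose each board-pile polyomino by its topmost $h$-strip. Let $B_\ell(x)$ be the generating function (by total unit squares) of board-pile polyominoes whose top $h$-strip has length $\ell$. Either such a polyomino is a single length-$\ell$ strip, or it is obtained by placing a length-$\ell$ strip atop a board-pile polyomino whose top strip has some length $m$; Lemma~\ref{lem:bpplength} tells us there are exactly $\ell + m - 1$ valid relative placements. This gives the functional equation
\[
B_\ell(x) \;=\; x^\ell + x^\ell \sum_{m \geq 1} (\ell + m - 1)\, B_m(x).
\]
Summing over $\ell$, and tracking the auxiliary series $S(x) = \sum_{m \geq 1} m B_m(x)$ that arises because the weight $\ell + m - 1$ couples the two indices, produces a $2 \times 2$ linear system in $B(x)$ and $S(x)$ whose coefficients are rational functions of $x$. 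Solving the system and simplifying yields the displayed closed form.

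The main obstacle is the algebra of this linear solve. One must evaluate several variants of the geometric series, in particular $\sum_{\ell \geq 1}(\ell-1)x^\ell = x^2/(1-x)^2$ and $\sum_{\ell \geq 1}\ell(\ell-1)x^\ell = 2x^2/(1-x)^3$, and then verify that the numerator collapses to $x(1-x)^3$ while the denominator reduces to $(1-2x)(1-3x+x^2) - 2x^3 = 1 - 5x + 7x^2 - 4x^3$. An equivalent route, which bypasses this computation, is simply to invoke the enumeration of board-pile polyominoes from Klarner~\cite{klarner}; combined with the bijection of Theorem~\ref{thm:polyominoisomorphism}, this reduces the corollary to the verification of the four initial values described in the first paragraph.
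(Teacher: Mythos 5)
Your proposal is correct, and its main route is genuinely different from the paper's. The paper's proof is essentially a citation: it invokes Theorem~\ref{thm:polyominoisomorphism} to identify period configurations with board-pile $n$-ominoes and then quotes the known enumeration (OEIS A001169, Klarner \cite{klarner}) together with its generating function $x(1-x)^3/(1-5x+7x^2-4x^3)$ to read off the recurrence and initial values. You instead derive the generating function from scratch by decomposing a board-pile polyomino at its topmost $h$-strip, setting up the functional equation $B_\ell(x)=x^\ell+x^\ell\sum_{m\geq 1}(\ell+m-1)B_m(x)$, and solving the resulting $2\times 2$ system in $B(x)$ and $S(x)=\sum_m mB_m(x)$; I have checked that this computation does close up to $B(x)=x(1-x)^3/(1-5x+7x^2-4x^3)$, and your initial-value check via $\sum\prod_{i=2}^{N}(\ell_{i-1}+\ell_i-1)$ over compositions correctly gives $1,2,6,19$. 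What your derivation buys is self-containedness: the corollary no longer rests on an external enumeration, and the same transfer-matrix idea generalizes to weighted or restricted variants. What the paper's route buys is brevity. One small point of rigor: Lemma~\ref{lem:bpplength} as stated only gives the \emph{necessary} bounds $1\leq d_{i+1}\leq |S_i|+|S_{i+1}|-1$; to claim that the number of placements is \emph{exactly} $\ell+m-1$ you should also note the easy converse, that every offset in that range yields a valid edge-on-edge adjacency and that distinct offsets yield distinct polyominoes.
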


\begin{proof}
The number of board-pile polyominoes of size $n$ can be found in the OEIS, sequence A001169 \cite{oeis}, and the values are exhibited in Table~\ref{tab:board-pilepolyominoes}. The generating function is $\frac{x(1-x)^3}{(1 - 5x + 7x^2 - 4x^3)}$ \cite{oeis}. This sequence of numbers follows the recurrence relation $a_n = 5a_{n-1} - 7a_{n-2} + 4a_{n-3}$ for $n \geq 5$.\cite{klarner}

\begin{table}[H] 
	\centering
	\begin{tabular}{|c|c|}
		\hline
		$n$  & $\#$ of board-pile polyominoes\\
		\hline
		$1$  & 1\\
		\hline
		$2$  & 2\\
		\hline
		$3$  & 6\\
		\hline
		$4$  & 19\\
		\hline
		$5$  & 61\\
		\hline
		$6$  & 196\\
		\hline
		$7$  & 629\\
		\hline
		$8$  & 2017\\
		\hline
		$9$  & 6466\\
		\hline
		$10$  & 20727\\
		\hline
		$11$  & 66441\\
		\hline	
	\end{tabular}
	\caption{Number of board-pile polyominoes containing $n$ unit squares for $1 \leq n \leq 11$.}
	\label{tab:board-pilepolyominoes}
\end{table}

By Theorem~\ref{thm:polyominoisomorphism}, the number of period configurations of an unlabelled complete graph up to equivalence is equal to the number of board-pile polyominoes of size $n$. Thus, the number of period configurations of a complete graph on $n$ vertices follows the recurrence relation $a_n = 5a_{n-1} - 7a_{n-2} + 4a_{n-3}$ for $n \geq 5$ with initial values $a_1 = 1$, $a_2 = 2$, $a_3 = 6$, and $a_4 = 19$. 

\end{proof}

In order to find the explicit formula, we must perform some algebra:

\begin{align*}
a_n &= 5a_{n-1} - 7a_{n-2} + 4a_{n-3}\\
a_n - 5a_{n-1} + 7a_{n-2} - 4a_{n-3} &= 0 && \text{Let $a_n = x^n$}\\
x^n - 5x^{n-1} + 7x^{n-2} - 4x^{n-3} &= 0\\
x^{n-3}(x^3 - 5x^{2} + 7x - 4) &= 0\\
x^3 - 5x^2 + 7x - 4 &= 0
\end{align*}

The roots of this equation are $\alpha_1 \approx 3.2056$, $\alpha_2 \approx 0.8972 - 0.6655i$, and $\alpha_3 \approx 0.8972 + 0.6655i$.

The solution for the $k^{th}$ value of this recurrence is

\vspace{1cm}

$$
\sum _{i=1}^{3} - {\frac { \left( -7 {{\alpha_i}}^{-2} + 13 {\alpha_i}^{-1} - 5  \right) 
		\left( {{\alpha_i}} \right) ^{k}}{ \left( 192\,{{\alpha_i}}^{-2} - 224\,{
			{\alpha_i}^{-1}} + 80\, \right) {\alpha_i}^{-1}}}
$$

\vspace{1cm}

This can be rewritten as $a_k = c_1(\alpha_1)^{\;k} + c_2(\alpha_2)^{\;k} + c_3(\alpha_3)^{\;k}$. The dominating term, out of these three roots, is the one which has the greatest modulus. These values are roughly 3.2056, 1.1171, and 1.1171. Thus, in the equation $a_k = c_1(\alpha_1)^{\;k} + c_2(\alpha_2)^{\;k} +c_3(\alpha_3)^{\;k} +c_4(\alpha_4)^{\;k}$, the dominant term is $c_1(\alpha_1 )^{\;k}\approx (0.1809)(3.2056)^{\;k}$.

\begin{corollary}
	$a_k$ has an asymptotic value of $0.1809 \times 3.2056^k$.
\end{corollary}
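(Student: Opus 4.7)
The plan is to invoke the standard theory of constant-coefficient linear recurrences and isolate the dominant term in the closed-form expression derived immediately before the corollary. Since the characteristic polynomial $x^{3} - 5x^{2} + 7x - 4 = 0$ of the recurrence $a_{n} = 5a_{n-1} - 7a_{n-2} + 4a_{n-3}$ factors into three distinct roots $\alpha_{1}, \alpha_{2}, \alpha_{3}$, the general solution takes the form $a_{k} = c_{1}\alpha_{1}^{k} + c_{2}\alpha_{2}^{k} + c_{3}\alpha_{3}^{k}$, and the three constants $c_{1}, c_{2}, c_{3}$ are uniquely pinned down by the initial values $a_{1} = 1$, $a_{2} = 2$, $a_{3} = 6$ through the closed-form coefficient formula already exhibited just above the corollary.

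First I would record the numerical data: the real root is $\alpha_{1} \approx 3.2056$, while the complex conjugate pair is $\alpha_{2,3} \approx 0.8972 \pm 0.6655 i$, both of modulus
\[
|\alpha_{2}| = |\alpha_{3}| = \sqrt{(0.8972)^{2} + (0.6655)^{2}} \approx 1.1171.
\]
Thus $|\alpha_{1}| > |\alpha_{2}| = |\alpha_{3}|$, so $\alpha_{1}$ is strictly dominant. Next I would evaluate the coefficient $c_{1}$ by substituting $\alpha_{1} \approx 3.2056$ into
\[
c_{1} = -\frac{-7\alpha_{1}^{-2} + 13\alpha_{1}^{-1} - 5}{\bigl(192\alpha_{1}^{-2} - 224\alpha_{1}^{-1} + 80\bigr)\alpha_{1}^{-1}},
\]
which yields $c_{1} \approx 0.1809$ (this is exactly the numerical evaluation reported in the preceding paragraph).

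Finally, I would bound the subdominant contribution. For any fixed constants $c_{2}, c_{3}$,
\[
\bigl|c_{2}\alpha_{2}^{k} + c_{3}\alpha_{3}^{k}\bigr| \leq (|c_{2}| + |c_{3}|)\,(1.1171)^{k},
\]
and since $1.1171 / 3.2056 < 1$, this quantity is $o(\alpha_{1}^{k})$. Dividing through by $\alpha_{1}^{k}$ gives
\[
\frac{a_{k}}{\alpha_{1}^{k}} = c_{1} + c_{2}\bigl(\alpha_{2}/\alpha_{1}\bigr)^{k} + c_{3}\bigl(\alpha_{3}/\alpha_{1}\bigr)^{k} \longrightarrow c_{1} \approx 0.1809
\]
as $k \to \infty$, which is precisely the asymptotic claim $a_{k} \sim 0.1809 \times 3.2056^{k}$.

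There is essentially no obstacle here beyond the numerical evaluation of $c_{1}$; the only cautionary point is to verify that the two complex roots genuinely have modulus less than $\alpha_{1}$, which is immediate from the numerics above. The argument is a textbook application of the dominant-eigenvalue principle for linear recurrences.
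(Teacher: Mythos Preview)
Your proposal is correct and follows essentially the same approach as the paper: identify the roots of the characteristic polynomial $x^{3}-5x^{2}+7x-4$, compare their moduli to isolate the dominant root $\alpha_{1}\approx 3.2056$, and read off the coefficient $c_{1}\approx 0.1809$ from the closed-form expression displayed just before the corollary. If anything, your write-up is slightly more explicit than the paper's in justifying why the complex-conjugate terms are $o(\alpha_{1}^{k})$, but the underlying argument is identical.
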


\section{Period Configurations of Labelled Complete Graphs}

How many period configurations exist on \textit{labelled} complete graphs? We know that a configuration $C$ is a period configuration if and only if $f^{-1}(C)$ is a board-pile polyomino. 

Our method must involve finding all labelled ordered partitions of the vertices. Then, given a labelled ordered partition, we must determine the number of ways that the $i^{th}$ part can be connected edge on edge with parts $i-1$ and $i+1$. 

Let $K_n$ be a complete graph. The number of labelled ordered partitions of a complete graph is just the ordered Bell numbers \cite{mullen}. We can view this as all of the ways that a set of labelled unit squares can be grouped together into ordered strips but without yet affixing the strips together to form a polyomino. 

For every possible ordered partition, we must determine all of the possible stack sizes which could be shared by the vertices in each respective part. Recall that this is equivalent to determining all of the ways that an ordered set of strips can be oriented with respect to each other so as to create a polyomino. 
 
For each period orientation of $K_n$, assign all possible stack sizes by using the product $\prod_{i=1}^{N}(|S_{i-1}+ |S_i| - 1)$ (the product of the number of ways each strip $S_i$ with length $|S_i|$ can be adjacent to its neighbours). Give the period orientations of $K_n$ some ordering from $1$ to $R(K_n)$. Let $|S^j_{i}|$ be the length of the $i^{th}$ strip in the $j^{th}$ period orientation of $K_n$. Then we reach that the number of labelled period configurations that exist on $K_n$ is 

$$\sum_{j=1}^{R(K_n)} \prod_{i=1}^{N}(|S^j_{i-1}| + |S^j_i| - 1).$$

\begin{theorem}
	The number of period configurations which exist on a labelled complete graph with $n$ vertices is
	
	$$\sum_{j=1}^{R(K_n)} \prod_{i=1}^{N}(|S^j_{i-1}| + |S^j_i| - 1).$$
\end{theorem}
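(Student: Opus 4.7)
The plan is to classify each period configuration on a labelled $K_n$ by two pieces of data: first, a labelled ordered partition of $V(K_n)$ into nonempty parts $V_1, V_2, \ldots, V_N$ (recording which vertices share a common stack size and the ordering of those common values from least to greatest), and second, given that partition, a tuple of positive differences $(d_2, d_3, \ldots, d_N)$ between consecutive stack-size levels. Since equivalence allows us to add any constant to every stack size, we normalize so that the smallest stack size is $0$; then the partition plus the differences determine the configuration uniquely, and conversely every configuration (labelled on $K_n$) arises this way.

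First I would note that the labelled ordered partitions of $V(K_n)$, summed over the number $N$ of parts, are exactly the "period orientations" indexed by $j = 1, \ldots, R(K_n)$ referenced immediately before the theorem, where the $j$-th partition has part sizes $|S^j_1|, \ldots, |S^j_N|$. For each such $j$, Theorem~\ref{thm:polyominoisomorphism} together with its preimage $f^{-1}$ identifies the period configurations on the unlabelled $K_n$ with common-size-sequence $|S^j_1|, \ldots, |S^j_N|$ with the board-pile $n$-ominoes whose $i$-th strip has length $|S^j_i|$; Lemma~\ref{lem:bpplength} then says the admissible difference-tuples are exactly those with $1 \le d_{i+1} \le |S^j_i| + |S^j_{i+1}| - 1$ for $1 \le i \le N-1$. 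These choices are independent across $i$, so the number of admissible tuples is
\[
\prod_{i=1}^{N-1}\bigl(|S^j_i| + |S^j_{i+1}| - 1\bigr),
\]
which is the displayed inner product (under the stated convention that any $i=1$ factor contributes trivially, reflecting $d_1 = 0$). Summing over all $R(K_n)$ period orientations gives the claimed total.

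The main obstacle I expect is the bookkeeping verification that the above decomposition is truly a bijection between labelled period configurations and pairs (period orientation, admissible difference-tuple); in particular, one must check that distinct pairs yield distinct configurations on the labelled $K_n$. This reduces to the two observations that (i) two configurations on a labelled graph coincide iff every labelled vertex carries the same stack size, so once the partition into stack-size classes and the common stack values are fixed, the configuration is determined, and (ii) the normalization that the least stack size equals $0$ picks out a unique representative of each equivalence class, so no period configuration is counted twice and none is missed. With these checks in place, the outer-sum inner-product formula follows directly from Lemma~\ref{lem:bpplength} and the bijection of Theorem~\ref{thm:polyominoisomorphism}.
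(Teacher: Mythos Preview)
Your proposal is correct and follows essentially the same approach as the paper: the paper's argument (given as the discussion immediately preceding the theorem rather than a separate proof) likewise decomposes labelled period configurations into a labelled ordered partition (``period orientation'') together with independent choices of offsets $d_{i+1}$ constrained by Lemma~\ref{lem:bpplength}, then sums the resulting products over all $R(K_n)$ orientations. Your treatment is in fact slightly more careful than the paper's, since you explicitly verify that this decomposition is a bijection and flag the indexing convention for the $i=1$ factor, both of which the paper leaves implicit.
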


\section{Open Problems}

Prior to this result using polyominoes, the only results for counting period configurations in Parallel Diffusion were on paths and stars \cite{mullen}. It remains an open problem to count all of the period configurations up to equivalence on many other families of graphs like cycles, trees, and complete bipartite graphs. Much of the work on Parallel Diffusion since Long and Narayanan \cite{long} proved the conjecture from Duffy et al. \cite{duffy} has been on variations to the rules. However, there remain many questions about pre-period configurations that have not been answered in addition to the questions about counting period configurations. As for the relationship between Parallel Diffusion and polyominoes, very little is known. Are there properties of polyominoes in general (rather than just board-pile polyominoes) that can aid in our understanding of complete graphs in Parallel Diffusion? Conversely, can the research into Parallel Diffusion answer any questions in the field of polyominoes? Since there seems to be no reason to believe that polyominoes will be useful in analyzing any graphs other than the complete graph in Parallel Diffusion, the result that would be most satisfying would be one that relates general polyominoes to pre-period configurations of the complete graph and characterizes the period configurations (board-pile polyominoes) based on these pre-period configurations.

\end{document}